\definecolor{turqouise}{rgb}{0.07, 0.41, 0.35}
\definecolor{linkblue}{rgb}{0.1, 0.46, 0.82}
\newtheorem{theorem}{Theorem}
\numberwithin{theorem}{section}
\newtheorem{proposition}[theorem]{Proposition}
\newtheorem{lemma}[theorem]{Lemma}
\newtheorem{corollary}[theorem]{Corollary}
\theoremstyle{definition}
\newtheorem{definition}[theorem]{Definition}
\newtheorem{remark}[theorem]{Remark}
\crefname{equation}{}{}
\crefname{equation}{}{}
\crefname{figure}{Figure}{Figure}
\crefname{section}{Section}{Section}
\crefname{lemma}{Lemma}{Lemma}
\crefname{proposition}{Proposition}{Proposition}
\crefname{theorem}{Theorem}{Theorem}
\crefname{corollary}{Corollary}{Corollarie}
\crefname{definition}{Definition}{Definition}
\crefname{notation}{Notations}{Notation}
\crefname{remark}{Remark}{Remark}
\crefname{claim}{Claim}{Claim}
\crefname{assumption}{Assumption}{Assumption}
\newcommand{\R}{\mathbb{R}}
\newcommand{\IR}{\mathbb{IR}}
\newcommand{\C}{\mathbb{C}}
\newcommand{\IC}{\mathbb{IC}}
\title{Certifying zeros of polynomial systems using interval arithmetic}
\date{}
\author{Paul Breiding\thanks{PB: MPI MiS Leipzig. paul.breiding@mis.mpg.de. Funded by the Deutsche Forschungsgemeinschaft (DFG, German Research Foundation) -- Projektnummer 445466444; and funded by the European Research Council
(ERC) under the European Union's Horizon 2020 research and innovation programme (grant agreement No 787840)
},
Kemal Rose\thanks{KR: MPI MiS Leipzig. kemal.rose@mis.mpg.de. Funded by the European Research Council
(ERC) under the European Union's Horizon 2020 research and innovation programme (grant agreement No 787840)},
and
Sascha Timme\thanks{ST: Technische Universit\"at Berlin, Chair of Discrete Mathematics/Geometry. timme@math.tu-berlin.de. Supported by the Deutsche Forschungsgemeinschaft (German Research Foundation) Graduiertenkolleg {\em Facets of Complexity} (GRK~2434)}
}
\begin{document}
\maketitle
\begin{abstract}
We establish interval arithmetic as a practical tool for certification in
numerical algebraic geometry.
Our software \texttt{HomotopyContinuation.jl} now has
a built-in function \texttt{certify}, which proves the correctness of an isolated nonsingular
solution to a square system of polynomial equations.
The implementation rests on Krawczyk's method.
We demonstrate that it dramatically outperforms earlier approaches to certification.
We see this contribution as {powerful new tool} in numerical algebraic geometry, {that can make certification the default} and not just an option.
\end{abstract}

\section{Introduction}
Systems of polynomial equations appear in many areas of mathematics, as well as in many applications in the sciences and engineering.
In physics and chemistry the geometry of molecules is often modelled with algebraic constraints on the distance or the angle between atoms.
In kinematics the relation between robot joints is defined by polynomial equations.
In systems biology the steady-state equations for many bio-chemical reaction networks are algebraic equations.
A central task in all those applications is computing the isolated zeros of a system of polynomials.

The study of zeros of polynomial systems is at the heart of algebraic geometry.
The field of \emph{computational algebraic geometry} is often associated with symbolic computations based on Gr\"obner bases.
But over the last thirty years \emph{numerical algebraic geometry} (NAG) \cite{Sommese:Wampler:2005} emerged as an alternative; enabling us to solve problems infeasible with symbolic methods. The algorithmic framework in NAG is \emph{numerical homotopy continuation}.
Several implementations of this are available: \texttt{Bertini} \cite{Bertini}, \texttt{Hom4PS-3} \cite{Hom4PSArticle}, \texttt{HomotopyContinuation.jl} \cite{HC.jl}, \texttt{NAG4M2} \cite{NumericalAlgebraicGeometryArticle} and \texttt{PHCpack} \cite{PHCpack}.
The first and the third author are the developers of \texttt{HomotopyContinuation.jl}.

Hauenstein and Sottile remark in \cite{Hauenstein:Sottile:2012} that while all of these softwares ``routinely and reliably solve systems of polynomial equations with dozens of variables having thousands of solutions'', they have the shortcoming that ``the output is not certified'' and that ``this restricts their use in some applications, including those in pure mathematics''.
To remedy this, Hauenstein and Sottile developed the software \texttt{alphaCertified} \cite{Hauenstein:Sottile:2012}.
It can rigorously certify that Newton's method, starting at a given numerical approximation, converges quadratically to a true zero by using Smale's $\alpha$-theory \cite{Smale:1986}.
Hauenstein and Sottile's contribution to numerical algebraic geometry was a milestone.
Yet, \texttt{alphaCertified} produces rigorous certificates using expensive rational arithmetic.
This turns the big advantage of numerical computations, namely that they are fast, upside-down and makes certification of large problems prohibitively expensive. Thus, up to this point, the majority of researchers in applied algebraic geometry were kept from using numerical methods, because certification was too expensive and because without certification numerical methods can't be used for proofs.

{We give researchers a new\footnote{We do \emph{not} claim to introduce certification methods based on interval arithmetic in general. This is a well established technique. Our contribution is a new concrete implementation of interval arithmetic methods for numerical algebraic geometry.} {powerful tool in numerical algebraic geometry.} Our implementation is integrated in \texttt{HomotopyContinuation.jl} \cite{HC.jl}, so that in principle we can certify \emph{all} zeros of a system of polynomial equations (see \cref{sec:all zeros} below for more details). With a fast implementation certification becomes the default and is not just an option and enables the extensive use of numerical methods for rigorous proofs. This is underlined by at least 15 research works  \cite{breiding2022line, brysiewicz2021tangent, kohn2021adjoints, boege2021marginal, breiding2022equations, early2021planarity, martyanov2021solving, weinstein2021metric, lindberg2021estimating, sturmfels2021beyond, bender2021yet, agostini2021likelihood, brysiewicz2020nodes, sottile2021galois, sturmfels2021likelihood} that have used our implementation in the last two years.}

\subsection{Contribution}
Our contribution to the field of computational and applied algebraic geometry is an extremely fast and easy-to-use implementation of a certification method. This implementation outperforms \texttt{alphaCertified} by several orders of magnitude.
It makes the certification of solutions often a matter of seconds and not hours or days.
{This leap in performance can turn certification in numerical algebraic into default and not just an option.}

Starting from version 2.1 \texttt{HomotopyContinuation.jl} has a function \texttt{certify}\footnote{The technical documentation is available at\\
\url{https://www.juliahomotopycontinuation.org/HomotopyContinuation.jl/stable/certification}
}.
The function \texttt{certify} takes as input a \emph{square polynomial system} $F$ and a \emph{numerical approximation of a complex zero} $x\in\mathbb C^n$ (or a list of zeros). If the output says ``certified'', then this is a rigorous proof that a solution of $F=0$ is near $x$. If the output says ``not certified'', then this does not necessarily mean that there is no zero near $x$, just that the method couldn't find one.  Figure~\ref{fig1} shows an example of~\texttt{certify}.

We combine interval arithmetic and Krawczyk's method with numerical algebraic geometry to rigorously certify solutions to square systems of polynomial equations.
In technical terms, our implementation returns \emph{strong interval approximate zeros}.
We introduce this notion in Definition~\ref{def_interval_zero} below.
The strong interval approximate zero consists of a box in $\mathbb C^n$, which contains a unique true zero of the polynomial system. If the input is a list of zeros, the routine returns a list of distinct strong interval approximate zeros.
Therefore, our method can be used to \emph{prove} hard lower bounds on the number of zeros of a polynomial system.
Combined with theoretical upper bounds this can constitute rigorous mathematical proofs on the number of zeros of such systems. We explain this in more detail in the next subsection.
In addition, if the given polynomial system is real, we give a certificate whether the certified zero is a real zero ({the approximate zero does not need to be real for this}).
The returned boxes may also be used to verify if a real zero is positive real.
Therefore, our method can also be used to prove lower bounds on the number of real and positive real zeros of a polynomial system.

It is also possible to give a square system of rational functions as input to our implementation. Although this article is mostly formulated in terms of polynomial systems, Krawczyk's method also applies to square systems of rational functions ({in fact, to all analytic functions $\mathbb R^n\to\mathbb R^n$; see Section \ref{sec:krawczyk}}). Consequently, all statements about using our implementation for proofs are equally valid of square systems of rational functions.

\subsection{Certifying all zeros}\label{sec:all zeros}

{Our implementation is integrated in \texttt{HomotopyContinuation.jl} \cite{HC.jl}. This is a software for numerically solving systems of polynomials equations via homotopy continuation. The basic idea is as follows: suppose that $F(x)=(f_1(x),\ldots,f_n(x))$ is a system of polynomials in $n$ variables $x=(x_1,\ldots, x_n)$. To compute the solutions of the systems of equation $F(x)=0$ one takes another system of polynomials $G(x)=(g_1(x),\ldots,g_n(x))$, called \emph{start system}, for which the zeros are simple to compute. Then, $F$ and $G$ are joined with a path in the vector space of systems of polynomials. This path defines a homotopy $H(x,t):\mathbb C^n\times \mathbb C\to \mathbb C$, such that $H(x,1)=G(x)$ and $H(x,0)=F(x)$. The zeros of $G$ are continued towards the zeros of $F$ by solving the ODE initial value problems
$\tfrac{\partial H}{\partial t}  + \tfrac{\partial H}{\partial x}\dot x(t) = 0$, where $x(1)$ ranges over the zeros of $G$. For more details see the textbook \cite{Sommese:Wampler:2005}.}

{In the last paragraph there is nothing special about polynomials. This approach works for any analytic functions $F$ and $G$. However, in the case of systems of polynomial equations we can choose~$G$ such that we compute \emph{all} zeros of $F$. This follows from the \emph{Parameter Continuation Theorem} by Morgan and Sommese~\cite{MS1989}: suppose that $F(x)=F(x;p_0)$ is a point in family of polynomials systems $F(x;p)$ that depends polynomially on parameters $p\in\mathbb C^k$. The Parameter Continuation Theorem says that there exists a proper algebraic subvariety $\Sigma \subset \mathbb C^k$ with the following property. Let $\gamma(t)\colon [0, 1] \rightarrow \mathbb C^k$ with $\gamma(0)=p_0$ be a continuous path and~$H(x,t)$ the corresponding homotopy.
If $\gamma((0, 1])\cap \Sigma = \emptyset$, then as $t \to 0$, the limits of the solution paths $x(t)$ with $H(x(t),t)=0$ include all the isolated
solutions to $F(x; p_0)=0$.
This includes both regular solutions and solutions with multiplicity greater than one. Consequently, every parameter outside~$\Sigma$ provides a suitable start system for $F$.}

{The \emph{Parameter Continuation Theorem} implies the existence of start systems, such that we can compute all zeros of $F$, but it does not tell us how to set up these start systems, nor how to compute their zeros. In fact, different choices of families of parametrized systems lead to different start systems and thus different homotopy methods.
In \texttt{HomotopyContinuation.jl} \cite{HC.jl} one can choose between two well-established strategies for choosing start systems: the so-called totaldegree start system and the polyhedral start system \cite{HuSt95}.}

{Coming back to interval arithmetic we see that the zeros computed by polynomial homotopy continuation can be used as input for certification, so that we can \emph{certify all solutions} of a system of polynomial equations. There is one subtlety, though. Although the Parameter Continuation Theorem asserts that in principle we can find all solutions, since homotopy continuation involves numerical computations we can't rule out the possibiliy that some computations of solutions paths fail. Still, combining certification with homotopy continuation always gives \emph{lower bounds} on the number of zeros. This can be exploited in situations, where we know upper bounds. An example of such a scenario in enumerative geometry is discussed in Section \ref{subsec:3264} below, where we certify 3264 real zeros of a system of polynomials that is known to have at most 3264 complex zeros. Another example from optimization is discussed in \cite[Section 3.3]{EDDMV}. Here, certifying all zeros of a system of polynomial equations helps to rigorously compute the minimal Euclidean distance of a point to an algebraic hypersurface.}

\subsection{Comparison to previous works}
There are other implementations of certification methods using Krawczyk's method and interval arithmetic, e.g., the commercial \texttt{MATLAB} package \texttt{INTLAB} \cite{Rump1999}, the \texttt{Macaulay2} package \texttt{NumericalCertification} \cite{LeeM2}, and the \texttt{Julia} package \texttt{IntervalRootFinding.jl} \cite{BS}. {The theory of Krawczyk's method and interval arithmetic are explained, for instance, in \cite{Rump83}}.

Compared to \texttt{INTLAB}, the source code of our implementation is freely available and can be verified by anyone.
Additionally, \texttt{INTLAB} doesn't support the use of arbitrary precision interval arithmetic which limits its capability to certify badly conditioned solutions.
\texttt{NumericalCertification}, as of version 1.0, takes as input not the numerical approximation of a complex zero $x\in\mathbb C^n$, but instead a box $I$ in $\C^n$. Then, \texttt{NumericalCertification} attempts to certify that interval $I$ is a strong interval approximate.
The process of going from a numerical approximation $x$ to a good candidate interval~$I$ needs particular care, as illustrated in Section \ref{sec:implementation_details}.
\texttt{INTLAB} and \texttt{NumericalCertification} also both require manual work to obtain a list of all distinct distinct strong interval approximate zeros. The package \texttt{IntervalRootFinding.jl} can find all zeros of a multivariate function inside a given box in $\mathbb R^n$, whereas our implementation works in $\C^n$ and additionally certifies reality of zeros; see Section \ref{sec:reality}.

\subsection{Acknowledgements}
We thank Pierre Lairez for a discussion that initiated this project and for several helpful subsequent discussions on the topic. We thank anonymous referees for useful feedback that improved the article.

\subsection{Outline}
{
The rest of this article is organized as follows. In the next two sections we  give a short introduction to interval arithmetic
and explain the Krawczyk method.
Section \ref{section:implementation_details} focuses on implementation details. In Section \ref{sec:experiments} we demonstrate features of our implementations using two examples. For completeness, we give a proof of Krawczyk's method in Section \ref{proof of Krawczyk Method}, and in Section \ref{sec:reality} we discuss how to certify reality of zeros.
}

\section{Interval arithmetic}\label{sec:IA}

{Before we discuss our implementation, let us briefly introduce the basics of interval arithmetic.}

Since the 1950s researchers \cite{Moore:1966,Sunaga:1958} have worked on interval arithmetic, which allows certified computations while still using floating point arithmetic. We briefly introduce the concepts from interval arithmetic which are relevant for our article.

\subsection{Real interval arithmetic}

Real interval arithmetic concerns computing with compact real intervals. Following \cite{Mayer:2017} we denote the set of all compact real intervals by
$$\mathbb I\mathbb R:= \{[a,b]\mid a,b\in\mathbb R, a\leq b\}.$$
For $X, Y \in \IR$ and the binary operation $\circ \in \{ +,-, \cdot, / \}$ we define
\begin{equation}\label{real_interval_algebra}
X \circ Y= \{ x \circ y \,|\, x\in X,y\in Y\},
\end{equation}
where we assume $0 \notin Y$ in the case of division.
The interval arithmetic version of these binary operations, as well as other standard arithmetic operations, have explicit formulas. See, e.g., \cite[Sec. 2.6]{Mayer:2017} for more details.

\subsection{Complex interval arithmetic}
We define the set of \emph{rectangular complex intervals} as
$$\IC :=\{X + i Y\mid X,Y\in\IR\},$$
where $X+iY = \{x+iy\mid x\in X,y\in Y\}$ and $i = \sqrt{-1}$. In particular, every rectangular interval $I\in \IC$ has the property that 
\begin{equation}\label{rectangle_property}
x+iy, a+ib\in I \quad\Longrightarrow \quad x+ib\in I.
\end{equation}
Following \cite[Ch. 9]{Mayer:2017} we define the algebraic operations for $I = X+iY, J=W+iZ\in\IC$ in terms of operations on the real intervals from (\ref{real_interval_algebra}):
\begin{alignat}{2}\label{complex_arithmetic}
I + J &:= (X + W) + i (Y+Z),\qquad I \cdot J &&:= (X\cdot W - Y \cdot Z) + i (X\cdot Z + Y\cdot W)\\\nonumber
I - J &:= (X - W) + i (Y-Z),\qquad \;\, \frac{I}{J} &&:= \frac{X \cdot W + Y\cdot Z}{W\cdot W + Z\cdot Z}  + i \frac{Y \cdot W - X\cdot Z}{W\cdot W + Z\cdot Z}.
\end{alignat}
It is necessary to use (\ref{real_interval_algebra}) instead of complex arithmetic for the definition of algebraic operations in $\IC$.
The following example from \cite{Mayer:2017} demonstrates this. Consider the intervals $I=[1,2] + i[0,0]$ and~$J = [1,1] + i[1,1]$.
Then, $\{x\cdot y| x\in I,y\in J\} = \{t(1+i)\mid 1\leq t\leq 2\}$ is not a rectangular complex interval, while $I\cdot J = [1,2] + i [1,2]$ is.

The algebraic structure of $\IC$ is given by following theorem; see, e.g., \cite[Theorem 9.1.4]{Mayer:2017}.
\begin{theorem}\label{thm_arithmetic}
The following holds.
\begin{enumerate}
\item $(\IC, +)$ is a commutative semigroup with neutral element.
\item $(\IC, +, \cdot)$ has no zero divisors.
\end{enumerate}
Furthermore, if $I,J,K,L\in\IC$, then
\begin{enumerate}[resume]
\item $I\cdot (J + K) \subseteq I\cdot J + I\cdot K$, but equality does not hold in general.
\item $I\subseteq J, K\subseteq L$, then $I \circ K\subseteq J\circ L$ for $\circ\in\{+,-,\cdot, /\}$.
\end{enumerate}
\end{theorem}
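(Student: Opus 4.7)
The plan is to deduce each of the four claims from the analogous facts about real interval arithmetic, which are themselves immediate from the set-theoretic definition (\ref{real_interval_algebra}). Throughout I write $I = X+iY$, $J = W+iZ$, and, when a third interval is needed, $K = P+iQ$, and recall from (\ref{complex_arithmetic}) that every complex-interval operation is built from a fixed polynomial expression in the real-interval components.

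For (1), the additive neutral element is $[0,0]+i[0,0]$; commutativity and associativity of $+$ in $\IC$ reduce coordinatewise to the same facts for $(\IR,+)$, which are immediate from $X+W=\{x+w : x\in X, w\in W\}$. For (4), I would first record the real-interval isotonicity $X\subseteq Y,\; P\subseteq Q \Rightarrow X\circ P\subseteq Y\circ Q$ for $\circ\in\{+,-,\cdot,/\}$, directly from (\ref{real_interval_algebra}) (with the proviso for division that the standing hypothesis $0\notin Q$ forces $0\notin P$, so both operations are defined), and then lift this to $\IC$ by applying it in each of the real-interval slots appearing in (\ref{complex_arithmetic}).

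The central step is (2). The key observation is that for any $a+ib\in I$ and $c+id\in J$, the genuine complex product $(a+ib)(c+id) = (ac-bd)+i(ad+bc)$ lies in the rectangular product $I\cdot J = (X\cdot W - Y\cdot Z) + i(X\cdot Z + Y\cdot W)$, since $ac\in X\cdot W$, $bd\in Y\cdot Z$, $ad\in X\cdot Z$, $bc\in Y\cdot W$ by (\ref{real_interval_algebra}). Hence if $I\cdot J = [0,0]+i[0,0]$, every such product vanishes in $\C$; since $\C$ has no zero divisors, a one-line quantifier argument then forces $I=\{0\}$ or $J=\{0\}$. For (3), I would expand both sides of $I\cdot(J+K)\subseteq I\cdot J + I\cdot K$ via (\ref{complex_arithmetic}) and invoke the real-interval subdistributivity $X\cdot(W+P)\subseteq X\cdot W + X\cdot P$, which is itself the trivial remark that using a single shared $x\in X$ on the left is a special case of allowing two independent copies on the right. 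Strictness of the inclusion is witnessed by $I=[-1,1]+i[0,0]$, $J=[1,1]+i[0,0]$, $K=[-1,-1]+i[0,0]$, for which $I\cdot(J+K)=\{0\}$ while $I\cdot J + I\cdot K = [-2,2]+i[0,0]$.

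The main obstacle is (2): one must resist the tempting but incorrect strategy of ``factoring'' at the interval level and instead argue by choosing representatives and verifying the pointwise inclusion $(a+ib)(c+id)\in I\cdot J$, which requires unfolding (\ref{complex_arithmetic}) and checking each of its four real-interval components against (\ref{real_interval_algebra}). Everything else --- parts (1), (3), and (4) --- is bookkeeping on top of the analogous, easier statements for $(\IR,+,\cdot)$, together with coordinatewise application of the formulas in (\ref{complex_arithmetic}).
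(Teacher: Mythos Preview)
The paper does not supply its own proof of this theorem; it is stated with a reference to Mayer's textbook (Theorem~9.1.4 there). Your argument is correct and is essentially the standard one: reduce each claim to the corresponding real-interval fact via the coordinate formulas~(\ref{complex_arithmetic}), with the pointwise inclusion $\{(a+ib)(c+id):a+ib\in I,\ c+id\in J\}\subseteq I\cdot J$ serving as the key step for~(2) and real subdistributivity for~(3).
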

Working with interval arithmetic is challenging because of the third item from the previous theorem: distributivity does not hold in $\IC$.
As a consequence, in $\IC$ the evaluation of polynomials depends on the exact order of the evaluation steps.
Therefore, the evaluation of polynomial maps $F: \IC^n \to \IC$ is only well-defined if $F$ is defined by a straight-line program, and not just by a list of coefficients. Figure \ref{fig2} demonstrates this issue in an example. See, e.g., \cite[Sec. 4.1]{burgisser2013algebraic} for an introduction to straight-line programs.

\begin{figure}[hb]
 \begin{minipage}{0.49 \textwidth}
     \centering
     \begin{tikzpicture}
         \node[shape=circle,draw=black, minimum size=6mm,inner sep=0pt] (x) at (0,3) {x};
         \node[shape=circle,draw=black, minimum size=6mm,inner sep=0pt] (y) at (1,3) {y};
         \node[shape=circle,draw=black, minimum size=6mm,inner sep=0pt] (z) at (2,3) {z};
         \node[shape=circle,draw=black, minimum size=6mm,inner sep=0pt] (+) at (0.5, 2) {+};
         \node[shape=circle,draw=black, minimum size=6mm,inner sep=0pt] (*) at (1.5,1) {$\cdot$};
         \path [->](x) edge (+);
         \path [->](y) edge (+);
         \path [->](+) edge (*);
         \path [->](z) edge (*);
 \end{tikzpicture}
 \end{minipage}
 \begin{minipage}{0.49 \textwidth}
     \centering
     \begin{tikzpicture}
         \node[shape=circle,draw=black, minimum size=6mm,inner sep=0pt] (x) at (0,3) {x};
         \node[shape=circle,draw=black, minimum size=6mm,inner sep=0pt] (z) at (1,3) {z};
         \node[shape=circle,draw=black, minimum size=6mm,inner sep=0pt] (y) at (2,3) {y};
         \node[shape=circle,draw=black, minimum size=6mm,inner sep=0pt] (*1) at (0.5, 2) {$\cdot$};
         \node[shape=circle,draw=black, minimum size=6mm,inner sep=0pt] (*2) at (1.5, 2) {$\cdot$};
         \node[shape=circle,draw=black, minimum size=6mm,inner sep=0pt] (+) at (1,1) {+};
         \path [->](x) edge (*1);
         \path [->](y) edge (*2);
         \path [->](z) edge (*1);
         \path [->](z) edge (*2);
         \path [->](*1) edge (+);
         \path [->](*2) edge (+);
 \end{tikzpicture}
 \end{minipage}
\caption{\label{fig2}The picture shows two straight-line programs for evaluating the polynomial $f(x,y,z)=(x+y)z$.  Let $I=([-1, 0],\,[1, 1],\, [0, 1])^T$. Then, the program on the left evaluated at $I$ yields $f(I) = ( [-1, 0]  + [1, 1] ) [0, 1] = [0,1]$, while the program on the right yields $f(I) = [-1, 0]  [0, 1]+ [1, 1][0, 1]   = [-1,1]$.}
\end{figure}

Arithmetic in $\IC^n$ is defined in the expected way. If $I=(I_1,\ldots,I_n),J=(J_1,\ldots,J_n) \in\IC^n$,
$$I + J = (I_1+J_1,\ldots,I_n+J_n).$$
Scalar multiplication for $I\in\IC$ and $J\in\IC^n$ is defined as $I \cdot J = (I\cdot J_1,\ldots,I\cdot J_n)$.
The product of an interval matrix $A=(A_{i,j})\in\IC^{n\times n}$ and an interval vector $I\in\IC^n$ is
\begin{equation}\label{def_matrix_mult}A\cdot I := I_1\cdot \begin{bmatrix}
A_{1,1}  \\
\vdots\\
A_{n,1}
 \end{bmatrix} + \cdots +
 I_n\cdot \begin{bmatrix}
A_{1,n}  \\
\vdots\\
A_{n,n}
 \end{bmatrix}.
\end{equation}
Similar to the one-dimensional case $(\IC^n, +)$ is a commutative semigroup with neutral element.

\section{Certifying zeros with interval arithmetic}\label{sec:krawczyk}

In 1969 Krawczyk \cite{Krawczyk:1969} developed an interval arithmetic version of Newton's method. Later in 1977 Moore \cite{Moore:1977} recognized that Krawczyk's method can be used to certify the existence and uniqueness of a solution to a system of nonlinear equations.
Interval arithmetic and interval Newton's method are a prominent tool in many areas of applied mathematics; e.g., in chemical engineering \cite{Gopalan:1995}, thermodynamics \cite{Hatice:2005} and robotics \cite{Kumar:Sen:Shooe:2015}. {See also the overview in~\cite{Rump2010}.}

{The results in this section are stated for general functions.
For a practical implementation it is however necessary to compute interval enclosures (see \cref{enclosure}). We discuss our approach in the context of polynomial systems in Section \ref{sec:IE} below.
A generalization in this spirit is discussed in \cite{BLL:2019}.}

\subsection{Krawczyk's method}
In this section we recall Krawczyk's method. First, we need three definitions.

\begin{definition}[Interval enclosure]\label{enclosure}
Let $F: \mathbb{C}^n \rightarrow \mathbb{C}^{n}$. A map $\square F: \mathbb{IC}^n \rightarrow \mathbb{IC}^{n}$ is an interval enclosure of $F$ if for every $I \in \mathbb{IC}^n$ we have
$
\{F(x) \mid x \in I\} \subseteq \square F(I).$
\end{definition}
In the rest of this article we use the notation $\square F$ to denote the interval enclosure of $F$.
Also, we do not distinguish between a point $x\in\mathbb C^{n}$ and the complex interval $[\mathrm{Re}(x),\mathrm{Re}(x)] + i[\mathrm{Im}(x),\mathrm{Im}(x)]$ defined by $x$. We simply use the symbol ``$x$'' for both terms so that $\square F(x)$ is well-defined.
\begin{definition}[Interval matrix norm]\label{matrix_norm}
Let $A\in\mathbb{IC}^{n\times n}$. We define the operator norm of $A$ as
$\Vert A\Vert_\infty := \max\limits_{B\in A} \max\limits_{v\in\mathbb C^n} \tfrac{\Vert Bv\Vert_\infty}{\Vert v\Vert_\infty},$
where $\Vert (v_1,\ldots,v_n) \Vert_\infty = \max_{1\leq i\leq n}\vert v_i\vert$ is the infinity norm in $\mathbb C^n$.
\end{definition}

Next we introduce an interval version of the Newton operator, the \emph{Krawczyk operator} \cite{Krawczyk:1969}.
\begin{definition}\label{def_krawczyk}
Let $F: \mathbb{C}^n \rightarrow \mathbb{C}^n$ be differentiable, and $\mathrm{J}F$ be its Jacobian matrix seen as a function $\mathbb C^n \to \mathbb C^{n\times n}$.
Let $\square F$ be an interval enclosure of $F$ and $\square \mathrm{J}F$ be an interval enclosure of~$\mathrm{J}F$.
Furthermore, let $I \in \mathbb{IC}^n$ and $x \in \mathbb C^n$ and let $Y \in \mathbb C^{n\times n}$ be an invertible matrix.
We define the Krawczyk operator
\begin{align*}
K_{x,Y}(I) := x - Y \cdot \square F(x) + (\mathbf 1_n - Y \cdot \square \mathrm{J}F(I))(I-x).
\end{align*}
Here, $\mathbf 1_n$ is the ${n\times n}$-identity matrix.
\end{definition}

\begin{remark}
In the literature, $K_{x,Y}(I)$ is often defined using $F(x)$ and not $\square F(x)$.
Here, we use this definition, because in practice it is usually not feasible to evaluate $F(x)$ exactly.
Instead, $F(x)$ is replaced by an interval enclosure.
\end{remark}
\begin{remark}
The second part of Theorem \ref{thm:krawczyk} motivates to find a matrix $Y \in \C^{n \times n}$ such that $|| 1_n - Y \cdot \square \mathrm{J}F(I) ||_\infty$ is minimized.
 A good choice is an approximation of the inverse of $\mathrm JF(x)$.
\end{remark}

We are now ready to state the theorem behind Krawczyk's method.
The first proof for real interval arithmetic is due to Moore \cite{Moore:1977}. {A proof for complex data is at least known since the work by Rump \cite{Rump83}.
We recall the proof from \cite{BLL:2019} in Section \ref{proof of Krawczyk Method} below.
Note that all the data in the theorem can be computed using interval arithmetic.}

\begin{theorem}\label{thm:krawczyk}
Let $F: \mathbb{C}^n \rightarrow \mathbb{C}^n$ be differentiable and $I\in\IC^n$.
Let $x\in I$ and $Y\in\mathbb{C}^{n\times n}$ be an invertible complex $n\times n$ matrix.
The following holds.
  \begin{enumerate}
    \item If $K_{x,Y}(I) \subset I$, there is a zero of $F$ in $I$.
    \item If additionally
    $\sqrt{2} \, \lVert \mathbf 1_n - Y \square \mathrm{J}F(I) \rVert_\infty < 1$, then
    $F$ has exactly one zero in $I$.
    \end{enumerate}
\end{theorem}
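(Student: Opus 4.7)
The plan is to adapt the classical Krawczyk--Moore argument to the complex rectangular interval setting. Define the auxiliary map $g : \mathbb{C}^n \to \mathbb{C}^n$ by $g(z) := z - YF(z)$. Since $Y$ is invertible, the zeros of $F$ in $I$ are exactly the fixed points of $g$ in $I$. The central step is the inclusion $g(I) \subseteq K_{x,Y}(I)$; existence then follows from Brouwer's fixed point theorem, and uniqueness from a contraction estimate.

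For the key inclusion, I would apply the fundamental theorem of calculus along the straight segment from $x$ to $z$, which lies in the convex rectangular box $I$, to write
\[
F(z) - F(x) = M_z\,(z - x), \qquad \text{where } M_z := \int_0^1 \mathrm{J}F\bigl(x + t(z-x)\bigr)\,dt.
\]
Because $\square\mathrm{J}F(I)$ is a convex rectangular box in $\IC^{n\times n}$ that contains $\mathrm{J}F(y)$ for every $y \in I$, the average $M_z$ also lies in $\square\mathrm{J}F(I)$. Rewriting $g(z) = x - YF(x) + (\mathbf{1}_n - YM_z)(z - x)$, using $F(x) \in \square F(x)$ from \cref{enclosure}, and invoking the inclusion-isotonicity of interval arithmetic (\cref{thm_arithmetic}), I conclude $g(z) \in K_{x,Y}(I)$ for every $z \in I$.

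Part (1) then follows from Brouwer's fixed point theorem: under the hypothesis $K_{x,Y}(I) \subset I$, the continuous map $g$ sends the compact convex set $I$ (viewed as a subset of $\mathbb{R}^{2n}$, where it is homeomorphic to a closed ball) into itself, and hence admits a fixed point in $I$, which is a zero of $F$. For part (2), if $z_1,z_2 \in I$ are two zeros, then the same integral identity applied between $z_1$ and $z_2$ yields $z_2 - z_1 = (\mathbf{1}_n - YM)(z_2 - z_1)$ for some $M \in \square\mathrm{J}F(I)$, so $\mathbf{1}_n - YM$ lies in the interval matrix $\mathbf{1}_n - Y\,\square\mathrm{J}F(I)$. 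Taking $\|\cdot\|_\infty$ on both sides contracts $\|z_2 - z_1\|_\infty$ by the interval matrix norm, and the strict hypothesis forces $z_1 = z_2$.

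The main obstacle is accounting cleanly for the $\sqrt{2}$ factor in the uniqueness hypothesis. Although \cref{matrix_norm} defines $\|\cdot\|_\infty$ as a genuine complex operator norm, the upper bound on the contraction that is actually accessible through rectangular complex interval arithmetic naturally routes through the underlying real $\infty$-norm on $\mathbb{R}^{2n}$, which differs from the complex $\infty$-norm on $\mathbb{C}^n$ by up to $\sqrt{2}$. The extra $\sqrt{2}$ in the hypothesis precisely absorbs this discrepancy. Making that norm comparison rigorous is the only step that genuinely departs from Moore's real-variable original, and it is the one place where care is required.
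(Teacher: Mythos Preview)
Your argument is correct, and in fact cleaner than the paper's, but your final paragraph misdiagnoses the role of the $\sqrt{2}$. The paper does \emph{not} use the integral mean value theorem; instead it proves a lemma (\cref{lemma: oracle_ext}) via the \emph{real} mean value theorem applied separately to the real and imaginary parts of each coordinate of $G_Y$, passing through an intermediate point $w=\mathrm{Re}(z)+i\,\mathrm{Im}(x)$. That procedure produces \emph{two} matrices $M_1,M_2\in\mathbf 1_n-Y\,\square\mathrm{J}F(I)$ with $G_Y(z)-G_Y(x)=M_1\,\mathrm{Re}(z-x)+iM_2\,\mathrm{Im}(z-x)$, and the contraction estimate then picks up the factor $\sqrt{2}$ from the elementary bound $\lVert\mathrm{Re}\,v\rVert_\infty+\lVert\mathrm{Im}\,v\rVert_\infty\le\sqrt{2}\,\lVert v\rVert_\infty$. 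Your integral approach yields a \emph{single} matrix $M_z\in\square\mathrm{J}F(I)$, so for two zeros $z_1,z_2\in I$ you obtain directly $\lVert z_2-z_1\rVert_\infty\le\lVert\mathbf 1_n-Y\,\square\mathrm{J}F(I)\rVert_\infty\,\lVert z_2-z_1\rVert_\infty$ using \cref{matrix_norm}. No norm comparison between $\mathbb{C}^n$ and $\mathbb{R}^{2n}$ is needed, and the $\sqrt{2}$ in the hypothesis is simply slack: you actually prove uniqueness under the weaker assumption $\lVert\mathbf 1_n-Y\,\square\mathrm{J}F(I)\rVert_\infty<1$. So drop the ``main obstacle'' paragraph---your route buys you a sharper theorem, while the paper's route stays closer to Moore's real original at the cost of the extra constant.
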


To simplify our language when talking about intervals $I \in \IC^n$ satisfying Theorem \ref{thm:krawczyk} we introduce the following definitions.
\begin{definition}\label{def_interval_zero}
  Let $F: \C^n \rightarrow \C^n$ be differentiable and $I \in \IC^n$. Let $K_{x,Y}(I)$ be the associated Krawczyk operator (see Definition \ref{def_krawczyk}).
  If there exists an invertible matrix $Y\in\mathbb C^{n\times n}$, such that $K_{x,Y}(I)\subset I$, we say that $I$ is an \emph{interval approximate zero}~$F$.
  We call $I$ a \emph{strong interval approximate zero} of $F$ if in addition $\sqrt{2} \lVert \mathbf 1_n - Y \square \mathrm{J}F(I) \rVert_\infty < 1$ .
\end{definition}

\begin{remark}
{The name ``strong interval approximate zero'' is not common in the field of interval arithmetic. We introduce it as a reference to the work of Shub and Smale and the software \texttt{alphaCertified} \cite{Hauenstein:Sottile:2012} that inspired our work. Shub and Smale coined the name \emph{strong approximate zero} for points in the radius of quadratic convergence of Newton's method.}
\end{remark}
\begin{definition}
  If $I$ is an interval approximate zero, then, by Theorem \ref{thm:krawczyk}, $I$ contains a zero of~$F$. We call such a zero an \emph{associated zero} of $I$.
  If $I$ is a strong interval approximate zero then there is a unique associated zero and we refer to is as \emph{the} associated zero of $I$.
\end{definition}

The notion of strong interval approximate zero is stronger than the definition suggests at first sight. We not only certify that a unique zero of $F$ exists inside $I$, but even that we can approximate this zero with arbitrary precision. This is shown in the next proposition, which we prove in Section~\ref{proof of Krawczyk Method}
\begin{proposition}\label{interesting_prop}
Let $I$ be a strong interval approximate zero of $F$ and let $x^*\in I$ be the unique zero of $F$ inside $I$. Let $x\in I$ be any point in $I$. We define $x_0:=x$ and for all $i\geq 1$ we define the iterates $x_i := x_{i-1} - Y\,F(x_{i-1})$, where $Y\in\mathbb C^{n\times n}$ is the matrix from Definition \ref{def_interval_zero}. Then, the sequence $(x_i)_{i\geq 0}$ converges (at least linearly) to $x^*$.
\end{proposition}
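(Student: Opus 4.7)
The plan is to view the iteration $x_i = G(x_{i-1})$ with $G(y) := y - Y F(y)$ as a fixed-point iteration on $I$. Since $F(x^*)=0$ we have $G(x^*) = x^*$, so linear convergence will follow by the classical Banach argument once I show (i) $G(I) \subseteq I$ and (ii) $G$ is a contraction in $\|\cdot\|_\infty$ on $I$.

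For (i), I would apply the holomorphic fundamental theorem of calculus to the segment from $x$ to $y \in I$:
\[F(y) = F(x) + \bar J_y\,(y-x), \qquad \bar J_y := \int_0^1 \mathrm{J}F\bigl(x + t(y-x)\bigr)\,dt.\]
Since rectangular boxes are convex the segment lies in $I$, so each integrand belongs to the interval matrix $\square \mathrm{J}F(I)$, and because rectangular complex intervals are convex the average $\bar J_y$ does as well. Substituting rearranges $G(y)$ into the pattern of the Krawczyk operator,
\[G(y) = x - Y F(x) + (\mathbf 1_n - Y \bar J_y)(y - x).\]
Using $F(x) \in \square F(x)$, $\bar J_y \in \square\mathrm{J}F(I)$, $y - x \in I - x$, and inclusion monotonicity of interval arithmetic (Theorem~\ref{thm_arithmetic}(4)) then gives $G(y) \in K_{x,Y}(I) \subset I$. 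Induction on $i$ yields $x_i \in I$ for all $i \geq 0$.

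For (ii), I would apply the same identity between $x^*$ and an arbitrary $y \in I$, using $F(x^*)=0$:
\[G(y) - x^* = (\mathbf 1_n - Y \bar J)(y - x^*), \qquad \bar J := \int_0^1 \mathrm{J}F\bigl(x^* + t(y-x^*)\bigr)\,dt \in \square\mathrm{J}F(I).\]
Because $\mathbf 1_n - Y \bar J$ is a point matrix in the interval matrix $\mathbf 1_n - Y \square \mathrm{J}F(I)$, Definition~\ref{matrix_norm} immediately yields
\[\|G(y) - x^*\|_\infty \leq \|\mathbf 1_n - Y \bar J\|_\infty \|y - x^*\|_\infty \leq q\,\|y - x^*\|_\infty, \qquad q := \|\mathbf 1_n - Y \square \mathrm{J}F(I)\|_\infty.\]
The strong interval approximate zero hypothesis forces $q < 1/\sqrt 2 < 1$. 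Specialising $y = x_i \in I$ produces $\|x_{i+1} - x^*\|_\infty \leq q\|x_i - x^*\|_\infty$, which is linear convergence with rate $q$.

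The conceptually delicate step is the double transfer: from the pointwise bound $\mathrm{J}F(z) \in \square\mathrm{J}F(I)$ along the segment, to the integrated matrix $\bar J$ sitting inside $\square\mathrm{J}F(I)$, to the inclusion $\mathbf 1_n - Y\bar J \in \mathbf 1_n - Y\square\mathrm{J}F(I)$ at the interval-arithmetic level. Both pieces hinge on convexity of rectangular complex intervals and on inclusion monotonicity from Theorem~\ref{thm_arithmetic}; once the membership is in hand, the norm bound is immediate from the supremum definition of $\|\cdot\|_\infty$ on interval matrices in Definition~\ref{matrix_norm}.
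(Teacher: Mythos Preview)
Your argument is correct and in fact slightly sharper than the paper's. The paper proves the containment $G_Y(I)\subset K_{x,Y}(I)$ and the contraction estimate via a real mean-value theorem applied along the L-shaped path $x\to w:=\mathrm{Re}(z)+i\,\mathrm{Im}(x)\to z$; this produces \emph{two} matrices $M_1,M_2\in\mathbf 1_n-Y\,\square\mathrm{J}F(I)$ acting separately on $\mathrm{Re}(z-x)$ and $i\,\mathrm{Im}(z-x)$ (Lemma~\ref{lemma: oracle_ext}), and the resulting bound $\|G_Y(z_0)-G_Y(z_1)\|_\infty\le\sqrt2\,\|\mathbf 1_n-Y\,\square\mathrm{J}F(I)\|_\infty\,\|z_0-z_1\|_\infty$ carries the extra factor $\sqrt2$. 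Your route exploits holomorphy of $F$ directly through the integral fundamental theorem of calculus, so a \emph{single} averaged Jacobian $\bar J\in\square\mathrm{J}F(I)$ suffices and you obtain the contraction rate $q=\|\mathbf 1_n-Y\,\square\mathrm{J}F(I)\|_\infty<1/\sqrt2$ rather than $\sqrt2\,q<1$. The paper's decomposition would still work for maps that are only real-differentiable, but for polynomial $F$ your argument is both simpler and gives the better constant. One cosmetic point: in your part~(i) the symbol $x$ denotes the Krawczyk centre from Definition~\ref{def_interval_zero}, while in the proposition statement $x$ is the arbitrary initial iterate $x_0$; you may want to distinguish the two explicitly.
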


\section{Implementation details}\label{sec:implementation_details}
\label{section:implementation_details}

In this section we describe details of our implementation of Krawcyzk's method.

The certification routine takes as input a square polynomial system $F: \C^n \rightarrow \C^n$ and a finite list $X \subset \C^n$ of  (suspected) approximations of isolated nonsingular zeros of $F$.
It is also possible to provide a square system of rational functions as input, but in the following we focus in polynomial systems for simplicity.
Our implementation returns a list of strong interval approximate zeros $\mathcal{I}=\{I_1,\ldots,I_m\}$ in $\IC^n$, such that no two intervals $I_k$ and $I_\ell$, $k \ne \ell$, overlap.
If two strong interval approximate zeros don't overlap then this implies that their associated zeros are distinct.
Additionally, if $F$ is a real polynomial system then for each $I_k \in \mathcal{I}$ it is determined whether its associated zero is real.
The prototypical application of the certification routine is to take as input approximations of all isolated nonsingular solutions $X \subset \C^n$ of $F$ as computed by numerical homotopy continuation methods as discussed in Section \ref{sec:all zeros}.

\subsection{Interval enclosures for polynomial systems}\label{sec:IE}
The fact that distributivity doesn't hold in $\IC$ makes it necessary for us to define the polynomial system $F: \C^n \to \C^n$, and its interval enclosure $\square F$, by a straight-line program, and not just by a list of coefficients.
The overestimation of the interval enclosure $\square F$ increases with the size of the straight line program.
Therefore, it is good to express $F$ and its enclosure $\square F$ by the smallest straight line program possible.
To achieve this, \texttt{HomotopyContinuation.jl} automatically applies
a multivariate version of Horner's rule to reduce the number of operations necessary to evaluate $F$ and $\square F$.

\begin{remark}
Our implementation of interval enclosures can also be used to prove that a polynomial map $F:\mathbb C^n\to\mathbb C^m$ with real coefficients, evaluated at a real point $p\in\mathbb R^n$, is positive. To verify this, one takes an interval $I\in \IC^n$ of the form $I=J + i[0,0]^{\times n}$ such that $p\in J$. If $\square F$ is an interval enclosure of $F$, and if $\square F(I)\subset \mathbb R_{>0}^m + i[0,0]^{\times m}$, then this is a proof that $F(p) \in \mathbb R_{>0}^m$.
\end{remark}

\subsection{Machine interval arithmetic}
In the next subsection we give a method to construct an candidate $I \in \IC^n$ for a strong interval approximate zero.
Before, we need to study \emph{machine interval arithmetic}; the realization of interval arithmetic with finite precision floating point arithmetic.
We assume the standard model of floating point arithmetic \cite[Section 2.3]{Higham:2002}, where the result of a floating point operation is accurate up to relative unit roundoff $u$: $\mathrm{fl}(x \circ y) = (x  \circ  y)(1 + \delta),$ where $|\delta| \le u$ and $\circ\in\{+,-,*,/\}$.
For instance, following the IEE-754 standard, the unit roundoff in double precision arithmetic is $u = 2^{-53} \approx 2.2 \cdot 10^{-16}$.
The key property in the context of interval arithmetic is that each result of a floating point operation can be rounded outwards, such that the resulting \emph{interval} contains the true (exact) result; see, e.g., \cite[Section 3.2]{Mayer:2017}.
Therefore, given $X, Y \in \IC$ the result of $X \circ Y$, $\circ\in\{+,-,*,/\}$, is $\mathrm{fl}(X \circ Y) := \{ (x  \circ  y)(1+\delta) \,|\, |\delta| \le u, x\in X,y\in Y \}$ in machine arithmetic. This interval contains $X\circ Y$. It is \emph{larger}.
Additionally, for a given $x \in \IC$, all intervals of the form
$\{x + (|\mathrm{Re}(x_j)| + i |\mathrm{Im}(x_j)|)\delta \, | \, |\delta| \le \mu \}$ with $0 < \mu \le u$ are indistinguishable when working with precision $u$.

Consequently it is possible that the Krawczyk operator $K_{\tilde{x},Y}$, see Definition \ref{def_krawczyk},
is a contraction for the interval $I$, but that machine arithmetic can't verify this, because $\mathrm{fl}(X  \circ  Y)$ is larger than $X\circ Y$.
In such a case, the unit roundoff $u$ needs to be sufficiently decreased.
For this reason our implementation uses machine interval arithmetic based on double precision arithmetic as well as, if necessary, the arbitrary precision interval arithmetic implemented in \texttt{Arb} \cite{Johansson2017arb}.
{For instance, we could not certify all solutions in the example in Section \ref{subsec:3264} below using only 64-bit arithmetic, because the zeros are too ill-posed.}

\subsection{Determining strong interval approximate zeros}\label{sec:inflation}

In a first step, the certification routine attempts to produce for a given $x \in X$ a strong interval approximate zero $I \in \IC^n$.
Recall that for $I \in \IC^n$ to be a strong interval approximate zero we need by Theorem \ref{thm:krawczyk} to have a point $\tilde{x} \in I$, and a matrix $Y \in \C^{n \times n}$ such that $K_{\tilde{x},Y}(I) \subset I$, and~$\sqrt{2} \, \lVert \mathbf 1_n - Y \square \mathrm{J}F(I) \rVert_\infty < 1$.

Given a point $x \in X$ and a unit roundoff $u$, the point $x$ is refined  using Newton's method to maximal accuracy. We denote this refined point $\tilde{x}$.
Here, we assume that $x$ is already in the region of quadratic convergence of Newton's method.
Next, the point $\tilde{x}$ needs to be inflated to an interval~$I$ with $\tilde{x} \in I$.
This process is called $\varepsilon$-inflation in the literature \cite[Sec. 4.3]{Mayer:2017}.
However, choosing the correct $I$ is a hard problem:
if $I$ is too small or too large, then the Krawcyzk operator is not a contraction.

In spite of these difficulties, we found the following heuristic to determine $I$ work very well. {First, we compute $JF(\tilde{x})^{-1}$ in floating arithmetic, which yields a matrix $Y$. Then, we set $$I := (\tilde{x}_j \pm |(Y \cdot \square F(x))_j|u^{- \frac14})_{j=1,\ldots,n},$$ where $u$ is the unit roundoff. The motivation behind this choice is as follows:
If we assume $\tilde{x}$ to be in the region of quadratic convergence of Newton's method, it follows from the Newton-Kantorovich theorem that $||\mathrm JF(\tilde{x})^{-1}F(\tilde{x})||_\infty$ is a good estimate of the distance between~$\tilde{x}$ and the convergence limit $x^*$. This distance is approximated by $(Y \cdot \square F(x))_j$ for $1\leq j\leq n$. The factor $u^{- \frac14}$ accounts for the overestimation by machine interval arithmetic. Here is how we arrived at this factor: The best relative accuracy we can expect to get for the $j$-th entry of $x^*$ is about $\vert (x^*)_j\vert \cdot u$, so that $|| I - x^* ||_\infty$ needs to be larger than $(|(x^*)_j| u)^{-1/2}$ for quadratic convergence.
On the other hand, we need to have an $\varepsilon$-inflation of at least  $|(Y \cdot \square F({x}))_j|$ so that the inflated interval contains $(x^*)_j$. In the typical case we have $|(Y \cdot \square F({x}))_j|  > 1$, i.e., $|(Y \cdot \square F({x}))_j| > |(Y \cdot \square F({x}))_j|^\frac{1}{2}$. All of this motivates us to use $|(Y \cdot \square F({x}))_j|u^{-\frac{1}{2}}$ as the inflation constant. However, to account for hidden constant factors we need to increase this estimate. We found that replacing $u^{-\frac{1}{2}}$ by $u^{-\frac{1}{4}}$ produces a good estimate  that works well in all the examples we tested.}

Finally, if $I$ doesn't satisfy the conditions in Theorem~\ref{thm:krawczyk} the procedure is repeated with a smaller unit roundoff $u$.
This repeats until either a minimal unit roundoff is reached or the certification is successful.

\subsection{Producing distinct intervals}\label{sec_disjoint}
Assume now that the steps in Section \ref{sec:inflation} have been performed for all $x \in X$. We obtain a list of strong interval approximate zeros $I_1,\ldots,I_r \in \IC^n$.
In a final step we want to select a subset $M \subset \{1,\ldots,r\}$ such that for all $k, j \in M$, $k \ne j$, the intervals $I_k$ and $I_j$ do not overlap.
If two strong interval approximate zeros do not overlap then it is guaranteed that they have distinct associated zeros.
A simple approach to determine $M$ is to compare all intervals pairwise. However, this approach requires us to perform ${r \choose 2}$ interval vector comparisons. {
For larger problems this becomes prohibitively expensive:
in the example in Section \ref{six-bar-linkages} the number of necessary comparisons is already larger than $4\cdot 10^{9}$.}

Instead, we employ the following improved scheme to determine all non-overlapping intervals.
First, we pick a random point $q \in \C^n$ and compute in interval arithmetic for each $I_k$, $k \in M$, the squared Euclidean distance $d_k \in \IR$ between $I_k$ and $q$.
Due to the guarantees of interval arithmetic we have that $d_k$ and $d_\ell$ overlap if $I_k$ and $I_\ell$ overlap (but the converse it not necessarily true).
Next, we check for all overlapping intervals $d_k, d_\ell\in \{ d_k \in \IR \, | \, k = 1,\ldots,r\}$, whether $I_k$ and $I_\ell$ overlap, and if so, we group them accordingly.
This allows us to construct the set $M$ by selecting those intervals which don't overlap with any other and by picking one representative of each cluster of overlapping intervals.
The worst case complexity of this procedure still requires $O(r^2)$ operations, but in the common case where no or only a small number of intervals overlap $O(r \log r)$ operations are sufficient.

\section{Applications}\label{sec:experiments}
{In this section we showcase two example applications of our certification method.}

{The first example is from enumerative geometry and demonstrates how our method can be used for rigorous proofs. The second example is an application from kinematics, which shows that our implementation can deal with large problems and that our strategy for producing distinct intervals from Section \ref{sec_disjoint} is indispensable. This is underlined by the fact that with our computation we improve a result from the literature.
Both examples emphasize the speed compared to the symbolic approaches, and they rely on the option to modulate the precision thanks to
our usage of \texttt{Arb}~\cite{Johansson2017arb}.}

All reported timings were obtained on an desktop computer with a 3.4 GHz processor running \texttt{Julia}~1.5.2 \cite{BEKV} and \texttt{HomotopyContinuation.jl} version 2.2.2.

\subsection{3264 real conics}\label{subsec:3264}
We demonstrate how certification methods in numerical algebraic geometry allow to proof theorems in algebraic geometry.
This example furthermore reveals the superior speed of our implementation compared to  \texttt{alphaCertified}.

In \cite{3264} we used \texttt{alphaCertified} to prove that a certain arrangement of five conics in the plane had 3264 real conics, which were simultaneously tangent to each of the five given conics.
Such an arrangement is called \emph{totally real}.
It was known before that such arrangements exist \cite{RTV97}, but an explicit instance was not known.
The fact that \texttt{alphaCertified} provides a proof for a totally real instance highlights the relevance of certification software in algebraic geometry.

The strategy for the computation is this.
The zeros of the system (12) in \cite{3264} give the coordinates of the 3264 conics which are tangent to five given conics.
We compute the zeros for the coordinates of the specific instance in \cite[Figure 2]{3264} using \texttt{HomotopyContinuation.jl}.
This is a numerical computation.
Therefore, it is inexact and cannot be used in a proof. Next, we take the inexact numerical zeros as starting points for our certification method.
If our implementation outputs that it has found a real certified zero, then this is an exact result and hence it is a proof that the zero is real.
This way we can prove that indeed all the 3264 conics for the instance in \cite[Figure 2]{3264} are real.
See also the proof of \cite[Proposition 1]{3264} for a more detailed discussion.

The certification with \texttt{alphaCertified} took us \emph{more than 36 hours}.
In contrast, our implementation certifies the reality and distinctness of the 3264 conics in \emph{less than three seconds}.

\begin{figure}
\begin{center}
\includegraphics[width = 0.7\textwidth]{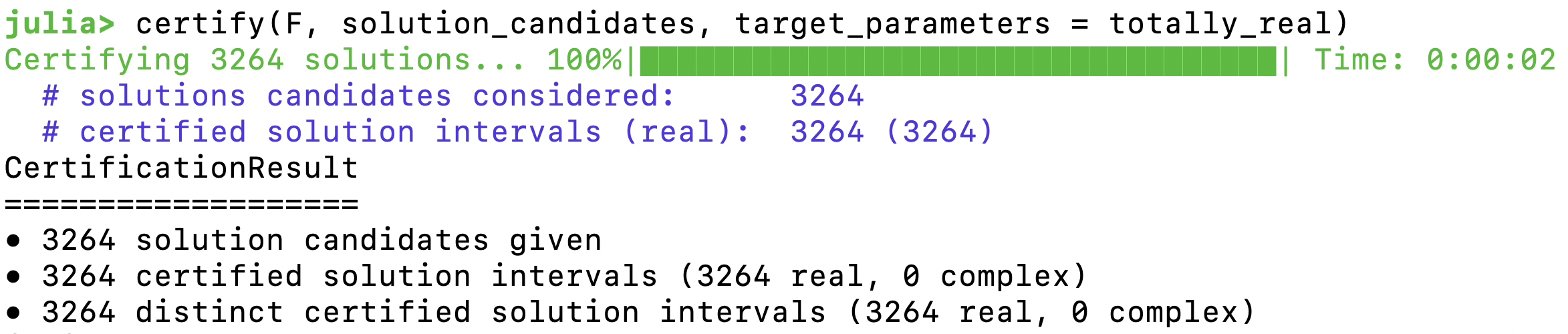}
\end{center}
\caption{\label{fig1}
Screenshot from a \texttt{Julia} session, where we certify the 3264 real conics for the totally real arrangement from \cite{3264}. Here, \texttt{F} is the system of polynomials from (12) in \cite{3264}. The screenshot also demonstrates the simple syntax of our implementation.}
\end{figure}

\subsection{Numerical Synthesis of Six-Bar Linkages}
\label{six-bar-linkages}
Now we demonstrate that the certification routine can cope with large problems. With our computation we improve a result from the literature.

We consider the kinematic synthesis of six-bar linkages that use eight prescribed accuracy points as described in \cite{Plecnik:McCarthy:2014}.
In this article, the authors derive the synthesis equations for six-bar linkages of the Watt II, Stephenson II, and Stephenson III type.
Additionally, in \cite[Eq. (35)]{Plecnik:McCarthy:2014} they construct a system of 22 polynomials in 22 unknowns and 224 parameters, that can be used as a start system in a parameter homotopy to solve the synthesis equations of all three considered six-bar linkage types.

The number of non-singular zeros of this generalized start system is reported as 92,736.
It was computed using \texttt{Bertini} and a multi-homogeneous start system.
To certify the reported count, we solved the generalized start system using the monodromy method \cite{monodromy} implementation in \texttt{HomotopyContinuation.jl}.
In our computation we obtained 92,752 non-singular zeros for a generic choice of the 224 parameters. These are sixteen \emph{more} than reported in \cite{Plecnik:McCarthy:2014}.
We certified this count using our certification routine and obtained 92,752 distinct strong interval approximate zeros.
Therefore, we have a certificate that the generalized system has in general (at least) 92,752 non-singular solution.
This establishes that the result in \cite{Plecnik:McCarthy:2014} undercounts the true number of solutions.
The certification needed only 38.34 seconds which underlines the scalability of the certification routine. {Notice that the naive method for comparing intervals in Section \ref{sec_disjoint} gives $4.301.420.376$ pairs to check. This underlines the need for having an efficient algorithm for comparing pairs.}

\section{Proof of the Krawczyk method}
\label{proof of Krawczyk Method}

The idea for the proof of both Theorem \ref{thm:krawczyk} and Proposition \ref{interesting_prop} is to verify that for strong interval approximate zeros $I$ the map $G_Y(x)= x-Y\cdot F(x)$ defines a contraction on $I$.
If this is true, by Banach's Fixed Point Theorem there is exactly one fixed-point of this map in $I$.
Since $Y$ is invertible, this implies that there is exactly one zero to $F(x)$ in $I$.

Before we give the proof of Theorem \ref{thm:krawczyk}, we need a lemma.
It is a consequence of a complex version of the mean-value theorem.

\begin{lemma}
  \label{lemma: oracle_ext}
Let $Y\in\mathbb C^{n\times n}$ and $F:\mathbb C^n\to\mathbb C^n$. Define $G_Y(x)= x-YF(x)$. Let $I\in\IC^n$ be an interval vector and $x,z\in I$. Then, we have
\begin{enumerate}
\item $\Vert G_Y(z)-G_Y(x) \Vert_\infty \leq \sqrt{2}\,\Vert \mathbf 1_n - Y \cdot\square \mathrm{J}F(I)\Vert_\infty\, \Vert z-x\Vert_\infty.$
\item $G_Y(I)\subset K_{x,Y}(I)$.
\end{enumerate}
 \end{lemma}
\begin{proof}
The following proof is adapted from \cite[Lemma 2]{BLL:2019}.

In the proof we abbreviate $G:=G_Y$. We fix an interval $I\in\IC^n$ and $x,z\in\mathbb C^n$.

The first part of the lemma may be shown entry-wise. We will show this by combining a variant of the mean value theorem with the following observation:  $\mathrm{J}G(x) = \mathbf 1_n - Y \cdot \mathrm{J}F(x)$, so we get, using~\cref{rectangle_property}, the inclusion
  \begin{equation}\label{eq11}
    \mathrm{Re}(\mathrm{J}G(I)) + i \, \mathrm{Im}(\mathrm{J}G(I))  \subseteq\mathbf 1_n - Y \cdot\square \mathrm{J}F(I).
  \end{equation}
We define $w:=\mathrm{Re}(z) + i\mathrm{Im}(x)$.
Let $1 \leq j \leq n$ and let $G_j$ denote the $j$-th entry of $G$. 
We define the function $h(t):=G_j(tz + (1-t)w)$. The real and imaginary part of $h(t)$ are real differentiable functions of the real variable $t$.
The mean value theorem can be applied, and we find $0<t_1,t_2<1$ such that $\mathrm{Re}(h(1)) - \mathrm{Re}(h(0)) = \tfrac{\mathrm{d}}{\mathrm d t} \mathrm{Re}(h(t_1))$ and $\mathrm{Im}(h(1)) - \mathrm{Im}(h(0))  = \frac{\mathrm{d}}{\mathrm d t} \mathrm{Im}(h(t_2))$.
Setting $c_1 = t_1z+(1-t_1)w$ and $c_2 = t_2z+(1-t_2)w$ this implies
$$G_j(z) - G_j(w) = (\mathrm{Re}(G_j'(c_1)) +   i\mathrm{Im}(G_j'(c_2)))^T (z-w).$$ 
Similarly, we find $c_3,c_4$ on the line segment from $w$ to $x$ with 
$$G(w) - G(x) = (\mathrm{Re}(G_j'(c_3)) +   i\mathrm{Im}(G_j'(c_4))) (w-x).$$
We have $z-w = i(\mathrm{Im}(z-x))$ and $w-x = \mathrm{Re}(z-x)$. Therefore, 
\begin{equation}\label{eq24}
  \begin{aligned}
  \mathrm{Re}(G_j(z) - G_j(x)) &= -\mathrm{Im}(G_j'(c_2))^T\,\mathrm{Im}(z-x) + \mathrm{Re}(G_j'(c_3))^T\,\mathrm{Re}(z-x)\\
  & = \mathrm{Re}(a^T(z-x)),\\[0.5em]
  \mathrm{Im}(G_j(z) - G_j(x)) &= \mathrm{Re}(G_j'(c_1))^T\,\mathrm{Im}(z-x) + \mathrm{Im}(G_j'(c_4))^T \,\mathrm{Re}(z-x)\\
  &= \mathrm{Im}(b^T(z-x)),
  \end{aligned}
\end{equation}
where $a= \mathrm{Re}(G_j'(c_3)) + i \mathrm{Im}(G_j'(c_2))$ and $b=\mathrm{Re}(G_j'(c_1)) + i \mathrm{Im}(G_j'(c_4)).$ 
We get 
$$\vert G_j(z) - G_j(x)\vert  \leq \sqrt{2}\,\max\{ \vert a^T(z-x)\vert, \vert b^T(z-x)\vert\} \leq \sqrt{2}\,\max\{ \Vert a\Vert_1, \Vert b\Vert_1\}\, \Vert z-x\Vert_\infty,$$
where $\Vert \cdot \Vert_1$ is the $1$-norm (the dual of the $\infty$-norm). By construction, $c_1,c_2,c_3,c_4$ are contained in~$I$, because $I$ is convex. This implies that 
$a,b\in \mathrm{Re}(G_j'(I)) + i \mathrm{Im}(G_j'(I)).$ It follows, using \cref{eq11}, that 
$$\max\{ \Vert a\Vert_1, \Vert b\Vert_1\} \leq \Vert \mathbf 1_n - Y \cdot\square \mathrm{J}F(I)\Vert_\infty,$$
and so $\Vert G(z)-G(x)\Vert_\infty \leq \sqrt{2}\,\Vert \mathbf 1_n - Y \cdot\square \mathrm{J}F(I)\Vert_\infty\, \Vert z-x\Vert_\infty.$

For the second part, we have to show that for all $I\in\IC^n$ we have $G(I)\subset K_{x,Y}(I)$.
To show this we define the interval matrix $M:=(\mathbf 1_n - Y \square \mathrm{J}F(I))\in\IC^{n\times n}$.
By definition of $K_{x,Y}$ we have $G(x) + M(I-x) \subset K_{x,Y}(I)$.
Thus, we have to show that $G(z) - G(x) \in M(I-x)$, since $z\in I$ is arbitrary.
It follows from the equations in \cref{eq24} and using \cref{eq11} that we can find matrices $M_1,M_2\in M$ such that~$G(z) - G(x) = M_1 \mathrm{Re}(z-x) +i M_2 \mathrm{Im}(z-x)$.
Decomposing the matrices into real and imaginary part we find
$$G(z)-G(x) = \mathrm{Re}(M_1) \mathrm{Re}(z-x) - \mathrm{Im}(M_2) \mathrm{Im}(z-x) + i (\mathrm{Im}(M_1) \mathrm{Re}(z-x) + \mathrm{Re}(M_2) \mathrm{Im}(z-x)).$$
Since $z-x\in I$ and by definition of the complex interval multiplication from (\ref{complex_arithmetic}) and the interval matrix-vector-multiplication (\ref{def_matrix_mult}) we see that $G(z)-G(x)\in M (I-x)$.
This finishes the proof for the second part.

\end{proof}

\begin{proof}[Proof of Theorem \ref{thm:krawczyk} and Proposition \ref{interesting_prop}]
We fix $Y\in\mathbb C^{n\times n}$. The second part of Lemma \ref{lemma: oracle_ext} implies that, if we have $K_{x,Y} (I) \subseteq I$, then $G_Y(I) \subseteq I$. Brouwer's fixed point Theorem shows that $G_Y$ has a fixed point in~$I$. Since $Y$ is assumed to be invertible, the fixed point is a zero of $F$.
This finishes the proof for the first part of Theorem \ref{thm:krawczyk}.
For the second part let $z_0,z_1\in I$.
The first part of Lemma \ref{lemma: oracle_ext} implies
\begin{equation}\label{speed_of_convergence}
\lVert G_Y(z_0) - G_Y(z_1)  \rVert_\infty    \leq \sqrt{2} \lVert \mathbf 1_n- Y \cdot \square \mathrm{J}F(I) \rVert_\infty
 \lVert z_0 -z_1 \rVert_\infty.
\end{equation}
By assumption $ \sqrt{2} \lVert \mathbf 1_n- Y \cdot \square \mathrm{J}F(I) \rVert_\infty$ is smaller than 1 so $G_Y$ is a contraction.
Banach's Fixed Point Theorem implies that $G_Y$ has a unique fixed point in $I$. Since $Y$ is invertible, this fixed point is the unique zero of $F$ in $I$. This shows the second part of Theorem~\ref{thm:krawczyk}.

Finally, let $(x_i)_{i\geq 0}$ be the sequence defined in Proposition \ref{interesting_prop}. By assumption, $x_0\in I$ and $x_{i+1} = G_Y(x_{i})$, $i\geq 0$. The second part of Lemma \ref{lemma: oracle_ext} together with an induction argument imply that $x_i\in I$ for all $i$. Let $x^*$ be the unique zero of $F$ in $I$.
Then, \cref{speed_of_convergence} holds if we replace $z_0$ by $x^*$ and $z_1$ by $x_i$. We get $\lVert x^* - x_{i+1}  \rVert_\infty  < c\cdot \lVert x^* -x_i \rVert_\infty$ with a constant $c:=\sqrt{2} \lVert \mathbf 1_n- Y \cdot \square \mathrm{J}F(I) \rVert_\infty<1$. This shows linear convergence and finishes the proof of Proposition \ref{interesting_prop}.
\end{proof}

\section{Certifying reality}\label{sec:reality}
 For many applications only the real zeros of a polynomial system are of interest.
 Since numerical homotopy continuation computes in $\mathbb C^n$, it is important to have a rigorous method to determine whether a zero is real.

 Recall from Definition \ref{def_interval_zero} the notion of \emph{strong interval approximate zero}.

\begin{lemma}\label{lemma:real_solution}
Let $F: \C^n \rightarrow \C^n$ be a real square system of polynomials (or rational functions) and $I \in \IC^n$ a strong interval approximate zero of $F$. Then there exists $x\in I$ and $Y\in \C^{n\times n}$ satisfying $K_{x,Y}(I) \subset I$ and $\sqrt{2} \, \lVert \mathbf 1_n - Y \square \mathrm{J}F(I) \rVert_\infty < 1$.
  If additionally $\{ \bar{z} \, | \, z \in K_{x,Y}(I) \} \subset I$, the associated zero of $I$ is real.
\end{lemma}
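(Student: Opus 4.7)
The first sentence of the lemma is essentially a restatement of Definition~\ref{def_interval_zero}: by definition of a strong interval approximate zero there exist an invertible $Y$ and (since $I$ is a nonempty product of rectangles) a point $x\in I$ satisfying the two displayed conditions. So the real content of the lemma is the implication ``$\overline{K_{x,Y}(I)}\subset I$ $\Rightarrow$ the associated zero is real.''

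My plan is to exploit the uniqueness statement from Theorem~\ref{thm:krawczyk} together with the fact that $F$ has real coefficients, so that complex conjugation sends zeros of $F$ to zeros of $F$. Let $z^\ast\in I$ denote the unique zero of $F$ in $I$ guaranteed by the second part of Theorem~\ref{thm:krawczyk}. The goal is to show $\overline{z^\ast}\in I$; once we have that, $F(\overline{z^\ast})=\overline{F(z^\ast)}=0$ (by reality of $F$), so $\overline{z^\ast}$ is another zero of $F$ in $I$, and uniqueness forces $\overline{z^\ast}=z^\ast$, i.e., $z^\ast\in\R^n$.

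To show $\overline{z^\ast}\in I$, I would chain three inclusions. First, $z^\ast$ is a fixed point of the contraction $G_Y(w)=w-Y\,F(w)$ used in the proof of Theorem~\ref{thm:krawczyk}, so $z^\ast=G_Y(z^\ast)\in G_Y(I)$. Second, the second part of Lemma~\ref{lemma: oracle_ext} gives $G_Y(I)\subset K_{x,Y}(I)$, so $z^\ast\in K_{x,Y}(I)$. Third, the hypothesis $\{\bar z\mid z\in K_{x,Y}(I)\}\subset I$ then yields $\overline{z^\ast}\in I$, as required. Combining with the previous paragraph completes the proof.

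The only step that requires any care is the first inclusion: one must remember that $z^\ast$ is not only contained in $I$ but is actually a fixed point of $G_Y$, so it lies in the image $G_Y(I)$ and can be pushed into $K_{x,Y}(I)$ via Lemma~\ref{lemma: oracle_ext}. Note that we do not need $Y$ to be real for the argument; reality enters only through the identity $F(\bar z)=\overline{F(z)}$, which is where the hypothesis that $F$ is a real system is used.
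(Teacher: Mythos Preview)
Your proof is correct and follows essentially the same route as the paper's: locate the unique zero $z^\ast$ inside $K_{x,Y}(I)$, use reality of $F$ to see that $\overline{z^\ast}$ is again a zero, push it into $I$ via the hypothesis, and conclude by uniqueness. In fact you are slightly more careful than the paper, which simply asserts ``$F$ has a unique zero $s\in K_{x,Y}(I)\subset I$'' without explaining why $s$ lies in $K_{x,Y}(I)$; your explicit chain $z^\ast=G_Y(z^\ast)\in G_Y(I)\subset K_{x,Y}(I)$ via Lemma~\ref{lemma: oracle_ext} fills that small gap.
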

\begin{proof}
  Theorem \ref{thm:krawczyk} implies that $F$ has a unique zero $s \in K_{x,Y}(I) \subset I$.
  The element wise complex conjugate $\bar{s}$ is also a zero of $F$.
  If we have that $\bar{s} \in \{ \bar{z} \, | \, z \in K_{x,Y}(I) \} \subset I$, then $\bar{s} = s$, since otherwise
  $\bar{s}$ and $s$ would be two distinct zeros of $F$ in~$I$, contradicting the uniqueness result from Theorem \ref{thm:krawczyk}.
\end{proof}

For a wide range of applications positive real zeros are of particular interest.
\begin{corollary}
  Let $F: \C^n \rightarrow \C^n$ be a real square system of polynomials and $I \in \IC^n$ a strong interval approximate zero of $F$ satisfying the conditions of Lemma \ref{lemma:real_solution}.
  If $\textnormal{Re}(I) > 0$ then the associated zero of $I$ is real and positive.
\end{corollary}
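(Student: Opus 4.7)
The plan is to combine the reality conclusion of \cref{lemma:real_solution} with a direct coordinate-wise inspection of the interval $I$. Because the hypothesis explicitly says that $I$ satisfies the conditions of \cref{lemma:real_solution}, I already get for free that the associated zero $s \in I$ is real, i.e., $\mathrm{Im}(s) = 0$ componentwise.

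Next, I would use that $s \in I$ means, componentwise, that $\mathrm{Re}(s_k) \in \mathrm{Re}(I_k)$ for each $k = 1, \ldots, n$ (by the definition of a rectangular complex interval $I_k = X_k + i Y_k$ we have $\mathrm{Re}(I_k) = X_k$, which is precisely the projection of $I_k$ onto the real axis). Since $s$ is real, $s_k = \mathrm{Re}(s_k)$, so $s_k \in \mathrm{Re}(I_k)$ for every $k$. The assumption $\mathrm{Re}(I) > 0$ means that every element of $\mathrm{Re}(I_k)$ is strictly positive, so in particular $s_k > 0$ for every $k$. Hence $s$ is a real positive zero of $F$.

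There is essentially no obstacle here: the corollary is a one-line consequence of \cref{lemma:real_solution} plus the elementary observation that a real point lying inside a rectangular complex box lies inside the real projection of that box. The only small point worth stating explicitly is the convention that ``$\mathrm{Re}(I) > 0$'' is interpreted componentwise and strictly, so that it does force strict positivity of the associated zero rather than mere nonnegativity.
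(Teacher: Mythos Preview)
Your argument is correct and is exactly the immediate deduction the paper has in mind: the corollary is stated without proof because it follows at once from \cref{lemma:real_solution} together with the containment $s\in I$, which forces $s_k=\mathrm{Re}(s_k)\in\mathrm{Re}(I_k)\subset(0,\infty)$ for each coordinate. There is nothing to add.
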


If the reality test in Lemma \ref{lemma:real_solution} fails for a strong interval approximate zero $I \in \C^n$ then this does not necessarily mean that the associated zero of $I$ is not real.
A sufficient condition that $I$ is not real is that there is a coordinate such that the imaginary part of it does not contain zero.
\begin{lemma}\label{lemma:complex_solution}
  Let $F(x)$ be a square system of polynomials or rational functions and let $I \in \IC^n$ be a strong interval approximate zero of $F$.
  If there exists $k \in \{1,\ldots,n\}$ such that $0 \notin \mathrm{Im}(I_k)$ then the associated zero of $I$ is not real.
\end{lemma}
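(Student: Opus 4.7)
The plan is to unwind the definitions: the statement is essentially immediate once one identifies the coordinate of the associated zero with an element of the complex interval $I_k$.

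First, I would invoke the hypothesis that $I$ is a strong interval approximate zero together with \cref{thm:krawczyk}. By \cref{def_interval_zero}, there exist $x \in \mathbb C^n$ and an invertible $Y \in \mathbb C^{n \times n}$ with $K_{x,Y}(I) \subset I$ and $\sqrt{2}\,\lVert \mathbf{1}_n - Y \cdot \square \mathrm{J}F(I)\rVert_\infty < 1$. The second part of \cref{thm:krawczyk} then guarantees the existence of a unique zero $x^* \in I$ of $F$; this is the associated zero of $I$.

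Next, I would look at the $k$-th coordinate. By definition of $\IC^n$, we may write $I_k = X_k + i Y_k$ with $X_k, Y_k \in \IR$, so that $\mathrm{Im}(I_k) = Y_k$. Since $x^* \in I$, in particular $x^*_k \in I_k$, which means $x^*_k = a + ib$ for some $a \in X_k$ and $b \in Y_k$.

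Finally, the hypothesis $0 \notin \mathrm{Im}(I_k)$ gives $0 \notin Y_k$, and hence $b \neq 0$. Thus $x^*_k$ has nonzero imaginary part and is not a real number, so $x^* \notin \mathbb R^n$. This establishes the claim. There is no real obstacle; the only thing to be careful about is to distinguish the vector $x$ (the expansion point used in the definition of the Krawczyk operator) from the associated zero $x^* \in I$ produced by \cref{thm:krawczyk}.
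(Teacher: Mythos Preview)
Your argument is correct and follows exactly the same approach as the paper's proof: the associated zero lies in $I$, so its $k$-th coordinate lies in $I_k$, and the hypothesis $0 \notin \mathrm{Im}(I_k)$ forces this coordinate to be non-real. The paper's version is simply more terse, omitting the explicit unpacking of \cref{def_interval_zero} and \cref{thm:krawczyk} that you spell out.
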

\begin{proof}
  The associated zero $x$ of $I$ is contained in $I$. Since $0 \notin \mathrm{Im}(I_k)$ follows $x_k \notin \R$ and $x \notin \R^n$.
\end{proof}

Now assume that the certification routine produced a list $\mathcal{I}$ of $m$ distinct strong interval approximate zeros for a given system $F$, and that $m$ also agrees with the theoretical upper bound on the number of isolated, nonsingular zeros of $F$.
If we apply Lemma \ref{lemma:real_solution} to $I_k \in \mathcal{I}$, then we obtain only a
\emph{lower bound}, say $r$, on the number of real zeros of $F$.
However, combined with Lemma~\ref{lemma:complex_solution} we can also obtain an \emph{upper bound} of the number of real zeros.
If these two bounds agree we obtain a certificate that, among the associated zeros of the intervals in $\mathcal{I}$, there are \emph{exactly} $r$ real zeros.
An application of this is, e.g., the study of the distribution of the number of real solutions of the power flow equations \cite{Lindberg:Zachariah:Boston:Lesieutre:2020}.

 {\small
\bibliographystyle{alpha}
\bibliography{bibliography}
}
\end{document}